\documentclass[a4paper,12pt]{article}
 \bibliographystyle{plain} 
\usepackage{amssymb}
\usepackage{amsmath}
\usepackage{amsfonts}
\usepackage{amsthm}
\usepackage{amsmath}

\usepackage{tikz}
\usepackage{url}

\usepackage[english]{babel}
\usepackage[affil-it]{authblk}

\oddsidemargin=1mm \topmargin=-25mm
\textwidth 17.3cm \textheight 26cm

  \makeatletter
\def\@fnsymbol#1{\ensuremath{\ifcase#1\or  \ddagger\or
		\mathsection\or \mathparagraph\or \|\or **\or \dagger\dagger
		\or \ddagger\ddagger \else\@ctrerr\fi}}
\makeatother

\sloppy 
\newtheorem{theorem}{Theorem} 
\newtheorem{lemma}{Lemma}

\newtheorem{proposition}{Proposition}

\title{On extended perfect codes
}

\author{Konstantin Vorob'ev \\ Institute of Mathematics and Informatics, Bulgarian 
		Academy of Sciences,\\ Sofia 1113, Bulgaria \\konstantin.vorobev@gmail.com}

\begin{document}

\maketitle 

\begin{abstract}
	

We consider extended $1$-perfect codes in Hamming graphs $H(n,q)$. Such nontrivial codes are known only when $n=2^k$, $k\geq 1$, $q=2$, or  $n=q+2$, $q=2^m$, $m\geq 1$. Recently, Bespalov proved nonexistence of extended $1$-perfect codes for $q=3$, $4$, $n>q+2$. In this work, we characterize all positive integers $n$, $r$ and prime $p$, for which there exist such a code in $H(n,p^r)$. We also consider $2$-perfect codes in Hamming $H(n,q)$ and Johnson graphs $J(n,w)$ and find new necessary conditions on there existence.

\end{abstract}

\noindent{\bf Keywords:} perfect code, extended code, Krawtchouk polynomial, Hamming graph, Johnson graph.\\
\noindent{\it 2000 MSC:}  05B30, 05A05, 94B25.

\section{Introduction}\label{S:Introduction}

The {\emph{Hamming graph}} $H(n,q)$ is a graph whose vertices are the words of length $n$ over the alphabet  $\{0, 1, \dots, q-1\}$.
Two vertices are adjacent if and only if they differ in exactly one coordinate position, the Hamming distance $d(\bar x,\bar y)$ between vertices $\bar x$, $\bar y$ is the number of coordinates in which they differ. 
A subset $C$ of the set of vertices of $H(n,q)$ is called a {\emph{code}} in this graph.
A code distance $d$ of $C$ is the minimum distance between distinct elements of $C$.

A {\emph{Johnson graph}} $J(n,w)$, $n\geq 2w$, is a graph with the set of vertices ${[n] \choose w}$. Two vertices $x$, $y$ are adjacent if $|x \cap y|=w-1$.

A code $C$ in a simple graph $G$ is called a $r$-perfect code (perfect $r$-error-correction code), if the vertex set of $G$ is partitioned into balls of radius $r$ centered in the code vertices. It is known \cite{MWS} that if $q=p^m$ is a prime power, then there is a $1$-perfect code in $H(n, q)$ if and only if $n=\frac{q^t-1}{q-1}$ for some integer $t\geq 1$. In the case when $q$ is not a prime power, the problem of existence of $1$-perfect codes is far from being solved. Golomb and Posner proved \cite{GolombPosner64} nonexistence of $1$-perfect codes in $H(7,6)$ based on the nonexistence of a pair of orthogonal latin squares of
order $6$. Later Heden and Roos found \cite{HedRoos:2011} a necessary condition on existence of $1$-perfect codes, which in particular implies the non-existence of $1$-perfect codes in $H(19, 6)$. 
Golay \cite{Golay:49} also found two sporadic perfect codes with parameters $n=23$, $q=2$, $r=3$, and $n=11$, $q=3$ and $r=2$. For arbitrary $q$, it is known \cite{21e3ccf16d844538bb4cfae128246b7e, Best83, Hong1984OnTN} there are no nontrivial $r$-perfect codes in $H(n,q)$ for $q\geq 3$ and $r\geq 3$. In this work we are mainly interested in the case $r=2$. Reuvers showed \cite{21e3ccf16d844538bb4cfae128246b7e} the non-existence of such codes for $q=6,15,21,22,26,30,35$, van Lint  – for $q=10$ \cite{fc0961db19134eafb1bcbee8327dbde8}, and Bassalygo et al. \cite{BasZinLeo75} when $q=2^r3^s$. 
 
Another interesting class of codes in $H(n,q)$ is a class of so called extended $1$-perfect codes. A code $C$ in $H(n,q)$ is an extended $1$-perfect code if a puncturing (also known as a projection) of any coordinate position gives a $1$-perfect code in $H(n-1,q)$. It is known that such codes exist in $H(2^m, 2)$ and in $H(2^m + 2, 2^m)$ for any positive integer m (see \cite{MWS,BRZ:CR}). Formally, a one vertex in $H(2,q)$ is also a $1$-perfect code (we will call it trivial). Ball proved \cite{Ball:2012:1} that there are no linear extended $1$-prefect codes in $H(q + 2, q)$, when $q$ is odd prime. It is also known \cite{KKO:smallMDS} that there are no extended $1$-perfect codes in $H(7, 5)$ and $H(9, 7)$. Recently, Bespalov \cite{BESPALOV2022105607} proved nonexistence of extended $1$-perfect codes for $q=3$, $4$, $n>q+2$. 

In 1973, Delsarte formulated \cite{Delsarte:1973} a famous conjecture, that there are no nontrivial perfect codes in Johnson graphs. Here, by trivial one means a single vertex in $J(n,w)$ or two vertices at maximum distance in $J(2w,w)$ for odd $w$. 
For a Johnson graph, there is also an analogue of a Lloyd theorem and a method of $k$-regularity proposed by Etzion and Schwarz \cite{EtzionSchwartz2004}. Based on this method they showed that there are no $3$-perfect, $7$-perfect and $8$-perfect codes in Johnson graphs. There are also other necessary conditions on $(n,w,r)$ and computational results for particular values of $r$ of nonexistence up to some values of $n$. However, in general the problem of classification of such codes is far from being solved.

In this paper we are focused on extended $1$-perfect codes in Hamming graphs $H(n,q)$. Using a technique of weight distribution and some number-theoretical analysis we prove that when $q$ is a prime power, nontrivial extended $1$-perfect codes in Hamming graphs $H(n,q)$ exist only for $n$, $q$ listed above, i.e. in $H(2^m, 2)$ and $H(2^m + 2, 2^m)$, for positive integer $m$. Based on this technique we also find new necessary conditions on existence of $2$-perfect codes in Hamming $H(n,q)$ and Johnson graphs $J(n,w)$.


\section{Preliminaries}\label{S:Preliminaries}


An $r$-partition $(C_1,C_2,\dots, C_r)$ of the vertex set of a graph is called {\it equitable} with a {\it quotient matrix} $S=(s_{ij})_{i,j\in \{1,2,\dots, r\}}$ if every vertex from $C_i$ has exactly $s_{ij}$ neighbours in $C_j$. The sets $C_1, C_2,\dots, C_r$ are called {\it cells} of the partition.\\
Clearly, a $1$-perfect code in$ H(n,q)$ ia an equitable partition with the quotient matrix $\left( {\begin{array}{cc} 0 & n(q-1)  \\ 1 & n(q-1)-1   \end{array}} \right).$

Given a code $C$ in a simple graph $G = (V,E)$, the distance partition with respect to C is the partition
$\Pi=(C=C(0),C(1), \dots ,C(\rho))$ of $V$ with respect to the distance from code $C$, i.e. $C(i) = \{x \in V : d(x,C) = i\}$, and all $C(i)$ are nonempty.
The number $\rho = \rho(C)$ is defined by the equality $\bigcup\limits_{i=0}^{\rho}C(i)=V$ and is called the
covering radius of $C$. A nonempty code $C$ of a graph is called completely regular if the partition of the vertex set with respect to the distance from $C$ is equitable. 

For our arguments we need the following auxiliary statement.

\begin{proposition}\cite[Theorem 2]{BESPALOV2022105607}\label{prop:code_partition}.
	Given a code $C$ in $H(n,q)$, $C$ is an extended $1$-perfect code if and only if $(C=C(0), C(1), C(2))$ is an equitable partition in $H(n,q)$ with quotient matrix  $$S=\left( {\begin{array}{ccc} 0 & n(q-1) & 0 \\ 1 & q-2 & (n-1)(q-1) \\  0 & n & n(q-2) \end{array}} \right).$$
\end{proposition}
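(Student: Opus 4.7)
The plan is to prove the two implications separately. The forward implication uses that each puncture $C_i\subset H(n-1,q)$ of $C$ is a $1$-perfect code to pin down the minimum distance, the covering radius and the neighborhood statistics at each cell; the converse extracts the minimum distance and the cardinality from the equitable partition and then recovers the $1$-perfect property of each puncture.

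For the forward direction, I would first deduce that the minimum distance of $C$ is at least $4$: otherwise two codewords agreeing in all but at most $3$ coordinates would, after puncturing a differing coordinate, produce codewords of $C_i$ at distance at most $2$, contradicting its minimum distance $3$. The same puncturing argument yields covering radius at most $2$: for any $x$, a closest codeword of $C_i$ to the puncture $\bar x$ lifts to some $c\in C$ with $d(x,c)\le 2$. Row $0$ of $S$ is immediate from minimum distance $\ge 4$. For row $1$, each $y\in C(1)$ has a unique nearest codeword $c$ (a second one would lie within distance $2$ of $c$), differing from $y$ at some coordinate $i$; varying coordinate $i$ supplies $1$ neighbor in $C$ and $q-2$ in $C(1)$, while each of the $(n-1)(q-1)$ neighbors obtained by varying a different coordinate lies in $C(2)$, because any codeword within distance $1$ of such a neighbor would sit within distance $3$ of $c$, violating the minimum distance.

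The delicate step is row $2$. For $z\in C(2)$ and each coordinate $i$, $1$-perfectness of $C_i$ attaches to $\bar z$ a unique codeword $c^{(i)}\in C$ whose puncture is the nearest codeword of $C_i$ to $\bar z$. Since $z\notin C\cup C(1)$, $c^{(i)}$ must differ from $z$ at coordinate $i$ and at exactly one further coordinate. Varying coordinate $i$ of $z$ then produces one neighbor in $C(1)$ (the one matching $c^{(i)}_i$) and $q-2$ neighbors at distance $2$ from $c^{(i)}$, and I would show the latter all lie in $C(2)$: any codeword within distance $1$ of such a neighbor would, by uniqueness of the nearest puncture codeword at coordinate $i$, be forced to coincide with $c^{(i)}$ on every coordinate outside $i$, yielding two distinct codewords at distance $1$ and contradicting the minimum distance. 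Summing over $i$ gives exactly the $n$ and $n(q-2)$ entries of row $2$.

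For the converse, assume the partition is equitable with the stated matrix. The entry $s_{10}=1$ rules out codeword pairs at distance $2$, since their common neighbor would have two codeword neighbors. A distance-$3$ pair $c_1,c_2$ is excluded similarly: picking an intermediate vertex $y_1\in C(1)$ on a shortest $c_1$-$c_2$ path, the $q-1$ neighbors of $y_1$ obtained by varying its single disagreement coordinate with $c_1$ already saturate the full budget $s_{10}+s_{11}=q-1$ of $C\cup C(1)$ neighbors, but one further step from $y_1$ toward $c_2$ supplies an additional $C(1)$ neighbor. Thus every puncture $C_i$ has minimum distance at least $3$ and $|C_i|=|C|$. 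Edge counting between successive cells of the partition yields $|C|\cdot q\cdot[1+(n-1)(q-1)]=q^n$, so $|C_i|=q^{n-1}/[1+(n-1)(q-1)]$, and combined with $d(C_i)\ge 3$ this forces each $C_i$ to be $1$-perfect. The main obstacle is the row-$2$ analysis in the forward direction: at each of the $n$ coordinates one must both identify the unique codeword $c^{(i)}$ produced by the puncturing argument and rule out spurious codewords near the $n(q-2)$ candidate vertices in $C(2)$, and this coordination relies crucially on combining the minimum-distance bound with the uniqueness built into the $1$-perfect property of each puncture.
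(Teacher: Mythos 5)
Your argument is correct in substance, and it is worth noting that the paper itself gives no proof of this proposition at all: it is imported from Bespalov's work via the citation, and the subsequent text simply uses the quotient matrix. So your self-contained verification is genuinely additional content rather than a variant of an internal argument. Both directions are sound. In the forward direction you correctly extract minimum distance at least $4$ and covering radius at most $2$ from the $1$-perfectness of the punctures, and the delicate row-$2$ step works exactly as you describe: for $z\in C(2)$ and each coordinate $i$, the nearest codeword of $C_i$ to $\bar z$ lifts (injectively, since the minimum distance exceeds $1$) to a unique $c^{(i)}$ at distance $2$ from $z$, and the uniqueness in the definition of a $1$-perfect code forces the other $q-2$ neighbours along coordinate $i$ into $C(2)$; summing over the $n$ pairwise disjoint neighbour sets gives the row $(0,n,n(q-2))$. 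In the converse, the saturation count at a vertex of $C(1)$ (the $q-1$ neighbours along the disagreement coordinate already exhaust $s_{10}+s_{11}=q-1$, so a distance-$3$ codeword pair would overload $s_{11}$) together with the cell-size identity $|C|\cdot q\cdot\left(1+(n-1)(q-1)\right)=q^{n}$ and the sphere-packing equality does force each puncture to be $1$-perfect. You also implicitly (and correctly) read the statement as a three-cell partition $(C(0),C(1),C(2))$; the $C(3)$ in the displayed statement is inconsistent with the $3\times 3$ matrix and with the paper's own later usage, and your distance and covering-radius bounds show $C(2)\neq\emptyset$ while $C(3)=\emptyset$.

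One small repair: in your opening minimum-distance argument, the case $d(c_1,c_2)=1$ is not covered as written, since puncturing the unique differing coordinate yields equal words of $C_i$, which is not by itself a contradiction. For that case you should puncture a coordinate where the two codewords agree (one exists because $n\geq 2$), producing two distinct codewords of $C_i$ at distance $1$. The cases $d=2$ and $d=3$ go through exactly as you state.
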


Let us define a matrix $S'=\frac{n(q-1)E-S}{q}=\frac{1}{q}\left( {\begin{array}{ccc} n(q-1) & -n(q-1) & 0 \\ -1 & (n-1)(q-1)+1 & -(n-1)(q-1) \\  0 & -n & n \end{array}} \right).$



Our next step is to find the Jordan canonical form of $S'$.

\begin{lemma}\label{l:jordan} 
	For $q\geq 2$ and $n\geq 2$, the equation $S'=QJQ^{-1}$ holds, where 	
	$$Q=\left( {\begin{array}{ccc} 1 & 1 & 1 \\ \frac{q - 2}{n(q-1)} & -\frac{1}{q -1} &   1 \\  -\frac{1}{(n-1)(q-1)} & \frac{1}{(q-1)^2} &  1 \end{array}} \right),
	J=\left( {\begin{array}{ccc}  \frac{nq-n-q+2}{q}& 0 & 0 \\ 0 & n & 0 \\ 0 & 0 & 0 \end{array}} \right),$$ 
	$$Q^{-1}=\left( {\begin{array}{ccc}  \frac{n(n - 1)(q - 1)}{(nq - n - q + 2)(n + q - 2)}& \frac{n(n - 1)(q - 1)(q - 2)}{(nq - n - q + 2)(n + q - 2)}  & -\frac{n(n - 1)(q - 1)^2}{(nq - n - q + 2)(n + q - 2)} \\ \frac{(q - 1)^2}{q(n + q - 2)} & -\frac{n(q - 1)^2}{q(n + q - 2)} & \frac{(n - 1)(q - 1)^2}{q(n + q - 2)} \\  \frac{1}{q(nq - n - q + 2)} & \frac{n(q-1)}{q(nq - n - q + 2)} & \frac{(n-1)(q-1)^2}{q(nq - n - q + 2)}  \end{array}} \right).$$
	
\end{lemma}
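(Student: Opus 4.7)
The plan is to exploit the identity $S' = \tfrac{n(q-1)E - S}{q}$: if $\lambda$ is an eigenvalue of $S$ with eigenvector $v$, then $\tfrac{n(q-1)-\lambda}{q}$ is an eigenvalue of $S'$ with the same eigenvector $v$. So rather than diagonalize $S'$ directly, I would first handle the simpler integer matrix $S$ and then translate via this map.

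I would begin by computing the eigenvalues of $S$. Since $S$ is the quotient matrix of an equitable partition, each of its rows sums to $n(q-1)$, so $\lambda_1 = n(q-1)$ is an eigenvalue with eigenvector $(1,1,1)^T$. Dividing the characteristic polynomial by $(\lambda - n(q-1))$ leaves a quadratic whose roots I expect to be $\lambda_2 = q-2$ and $\lambda_3 = -n$. Under the map $\lambda \mapsto \tfrac{n(q-1)-\lambda}{q}$ these three values become $0$, $\tfrac{nq-n-q+2}{q}$, and $n$, matching the diagonal of $J$. A short case analysis confirms that for $n,q \geq 2$ the three eigenvalues are pairwise distinct, so $S$ and therefore $S'$ is diagonalizable, and the Jordan form is genuinely diagonal as claimed.

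Next I would compute eigenvectors for $\lambda_2$ and $\lambda_3$ by solving $(S - \lambda_i I)v = 0$ directly from the entries of $S$ and normalizing the last coordinate to $1$; the resulting vectors should coincide with the first two columns of $Q$, while the third column is the all-ones vector already identified. A short sanity check by plugging each candidate column into $Sv = \lambda v$ takes only a few lines per column. This establishes $SQ = Q\,\mathrm{diag}(q-2,-n,n(q-1))$, equivalently $S'Q = QJ$.

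The remaining task is to verify the displayed formula for $Q^{-1}$, which I would do by direct multiplication, checking that each entry of $QQ^{-1}$ comes out as the appropriate Kronecker delta. This is where the main obstacle lies, though it is purely algebraic: the entries of $Q^{-1}$ carry denominators $(nq-n-q+2)(n+q-2)$, $q(n+q-2)$, and $q(nq-n-q+2)$, and confirming the vanishing of the off-diagonal entries requires patient manipulation over a common denominator. The hypotheses $n,q \geq 2$ ensure $n+q-2 > 0$ and $nq-n-q+2 = (n-1)(q-1)+1 > 0$, so every expression in $Q$, $J$, and $Q^{-1}$ is well defined and nothing delicate beyond careful bookkeeping is needed.
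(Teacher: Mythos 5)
Your proposal is correct and amounts to the same verification-by-direct-computation that the paper gives (its proof is literally ``obtained by direct calculations'' plus a check that $\det(Q)\neq 0$); your organization via the eigenvalues $n(q-1)$, $q-2$, $-n$ of $S$ and the affine map $\lambda\mapsto\frac{n(q-1)-\lambda}{q}$ is a sensible way to structure that calculation, and all the claimed eigenvalues, eigenvectors, and the distinctness argument check out.
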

\begin{proof}
	The proof is obtained by direct calculations. Let us also notice that $$\det(Q)=-\frac{q(nq - n - q + 2)(n + q - 2)}{n(n - 1)(q - 1)^3}\neq 0,$$ for $n\geq2$ and $q\geq 2$.

\end{proof}
Evidently, the equation from Lemma \ref{l:jordan} corresponds to a reduction to the Jordan canonical form of the matrix $S'$. Hence, diagonal elements of $J$ are the eigenvalues of $S'$. For our following arguments we also need to know values of Krawtchouk polynomials on these eigenvalues.

Well-known family of orthogonal Krawtchouk polynomials is defined by the following equations
$$K_r(x,q,n)=\sum_{j=0}^{r}{(-1)^{j}q^{r-j}{n-r+j \choose j}{n-x \choose r-j}},$$
where $n$, $q$ and degree $r$ are nonnegative integers.

\begin{lemma}\label{l:Krawtchouk} 
  For $q,n,x\in \mathbb{Z}$, $q\geq 2$, $n\geq 2$, $0 \leq x \leq n$,we have that $$K_n(x,q,n)=(-1)^{x}(q-1)^{n-x}.$$ 
 
\end{lemma}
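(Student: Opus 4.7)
The plan is to prove this identity by direct computation, substituting $r=n$ into the definition of the Krawtchouk polynomial and reducing the result to an instance of the binomial theorem.

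First, I would substitute $r = n$ into
$$K_r(x,q,n)=\sum_{j=0}^{r}(-1)^{j}q^{r-j}\binom{n-r+j}{j}\binom{n-x}{r-j}.$$
The factor $\binom{n-r+j}{j}=\binom{j}{j}$ collapses to $1$, so the expression simplifies to
$$K_n(x,q,n)=\sum_{j=0}^{n}(-1)^{j}q^{n-j}\binom{n-x}{n-j}.$$

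Next, I would observe that $\binom{n-x}{n-j}$ vanishes unless $n-j\le n-x$, i.e.\ $j\ge x$, so only the terms with $j\in\{x,x+1,\dots,n\}$ contribute. Then I would change summation index by setting $k=n-j$, which runs from $0$ to $n-x$, yielding
$$K_n(x,q,n)=\sum_{k=0}^{n-x}(-1)^{n-k}q^{k}\binom{n-x}{k}=(-1)^{n}\sum_{k=0}^{n-x}\binom{n-x}{k}(-q)^{k}.$$

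The inner sum is, by the binomial theorem, equal to $(1-q)^{n-x}$. Hence
$$K_n(x,q,n)=(-1)^{n}(1-q)^{n-x}=(-1)^{n}(-1)^{n-x}(q-1)^{n-x}=(-1)^{x}(q-1)^{n-x},$$
where in the last step I use $(-1)^{2n-x}=(-1)^{-x}=(-1)^{x}$. This is the desired identity. There is no real obstacle here: the argument is a routine manipulation of binomial sums, and the only care needed is in tracking signs when switching from $j$ to $k=n-j$.
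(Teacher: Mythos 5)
Your proof is correct and follows essentially the same route as the paper: substitute $r=n$ into the definition, note that $\binom{n-r+j}{j}=1$, shift the summation index, and apply the binomial theorem to get $(q-1)^{n-x}$ with the sign $(-1)^x$. The only cosmetic difference is that you reindex by $k=n-j$ while the paper uses $j'=j-x$; both are routine and sound.
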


\begin{proof}
By definition of Krawtchouk polynomial for $r=n$, it is easy to compute  $$K_n(x,q,n)=\sum\limits_{j=0}^{n}{(-1)^jq^{n-j}\binom{n-x}{n-j}}=(-1)^x\sum\limits_{j'=0}^{n-x}(-1)^{j'}q^{n-x-j'}\binom{n-x}{n-x-j'}=(-1)^{x}(q-1)^{n-x}.$$

\end{proof}

Now let us turn back to extended $1$-perfect codes. We know that its distance partition $\Pi$ is an equitable partition.  Takes some integer $i$, $2\leq i \leq n$ and consider a graph $H_i(n,q)$ with the same vertex set and two vertices being adjacent if the distance (here, Hamming distance) between them equals $i$. The natural question here, is $\Pi$ equitable also in this new graph? The answer is positive, and not only for the Hamming graph. Actually, any equitable partition of a given distance regular graph $G$ will be equitable for distance-$i$ graph $G$, see \cite[Sect. 2.2.2, 2.1.5]{Martin:PhD} and \cite{Kro:struct}. Moreover, there is a way to find the corresponding quotient matrix of the partition in a new graph directly. In the following Lemma we make it for $H_n(n,q)$.    

\begin{lemma}\label{l:lastweight} 
	Given an extended $1$-perfect code $C$, then the distance partition $\Pi$ of $C$ in $H(n,q)$ is equitable also in $H_n(n,q)$. Corresponding quotient matrix in the last graph equals $QMQ^{-1}$, where
	 $$M=\left( {\begin{array}{ccc} (-1)^{\frac{nq-n-q+2}{q}}(q-1)^{\frac{n+q-2}{q}} & 0 & 0 \\ 0 & (-1)^n & 0 \\  0 & 0 & (q-1)^n \end{array}} \right),$$
	 and matrices $Q$ and $Q^{-1}$ are defined in Lemma \ref{l:jordan}.  
\end{lemma}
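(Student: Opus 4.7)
The plan is to recognize this as a general instance of the principle that equitable partitions of distance-regular graphs are automatically equitable for every distance-$i$ graph, and then to compute the new quotient matrix via the $P$-polynomial structure of the Hamming scheme. First I would invoke the general result cited immediately before the lemma statement: since $H(n,q)$ is distance-regular and $\Pi$ is equitable in $H(n,q)$, it is also equitable in $H_n(n,q)$.

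To identify the quotient matrix, I would use the standard fact that for each $i$ the distance-$i$ adjacency matrix satisfies $A_i=v_i(A_1)$ for a polynomial $v_i$ of degree $i$, uniquely determined by $v_i(\theta_j)=K_i(j,q,n)$ on the eigenvalues $\theta_j=n(q-1)-qj$, $j=0,1,\dots,n$, of $A_1$. Let $W$ denote the characteristic matrix of $\Pi$ (columns are cell indicators). The identity $A_1 W=WS$ iterates to $A_n W=v_n(A_1)W=W\,v_n(S)$, so the quotient matrix of $\Pi$ with respect to $H_n(n,q)$ equals $v_n(S)$. The linear change of variable $x\mapsto (n(q-1)-x)/q$ sending $\theta_j$ to $j$ gives the polynomial identity $v_n(x)=K_n\!\left(\tfrac{n(q-1)-x}{q},\,q,\,n\right)$ (both sides have degree $n$ and agree on the $n+1$ points $\theta_0,\dots,\theta_n$), and consequently $v_n(S)=K_n(S',q,n)$, where the Krawtchouk polynomial is evaluated on the matrix $S'=(n(q-1)E-S)/q$ by functional calculus.

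Finally, I would apply the Jordan decomposition $S'=QJQ^{-1}$ supplied by Lemma \ref{l:jordan}. Since $J$ is diagonal, $K_n(S',q,n)=Q\,K_n(J,q,n)\,Q^{-1}$, and $K_n(J,q,n)$ is the diagonal matrix whose entries are $K_n(\alpha,q,n)$ for the eigenvalues $\alpha\in\{(nq-n-q+2)/q,\,n,\,0\}$ of $S'$. Because $\Pi$ is equitable with eigenvalues of $S$ forced to be eigenvalues of $A_1$, these $\alpha$'s are among the Krawtchouk indices $\{0,1,\dots,n\}$ of $H(n,q)$, so Lemma \ref{l:Krawtchouk} evaluates each diagonal entry to $(-1)^{\alpha}(q-1)^{n-\alpha}$, producing exactly the matrix $M$. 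Hence the quotient matrix equals $QMQ^{-1}$.

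The main point requiring care is the identification $v_n(x)=K_n((n(q-1)-x)/q,\,q,\,n)$, which rests on the $P$-polynomial structure of the Hamming scheme (the eigenvalue of $A_n$ on the $j$-th eigenspace is $K_n(j,q,n)$) together with the linear relation between Krawtchouk indices and Hamming eigenvalues; once this is in hand the passage to $QMQ^{-1}$ is mechanical.
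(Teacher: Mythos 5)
Your proof is correct and follows essentially the same route as the paper: identify the quotient matrix of $\Pi$ in $H_n(n,q)$ as $K_n(S',q,n)$, diagonalize via Lemma~\ref{l:jordan}, and evaluate the diagonal entries with Lemma~\ref{l:Krawtchouk}. The only real difference is that you derive the identity ``quotient matrix $=K_n(S',q,n)$'' from the $P$-polynomial structure of the Hamming scheme ($A_nW=Wv_n(S)$), whereas the paper simply cites it from the literature, and you explicitly justify that the eigenvalues of $S'$ are integers in $\{0,\dots,n\}$ --- a hypothesis of Lemma~\ref{l:Krawtchouk} that the paper applies without comment.
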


\begin{proof}
	As we discussed before, $\Pi$ is equitable in $H_n(n,q)$. By \cite[Theorem 3 and Example 8]{Kro:struct}
	the corresponding quotient matrix equals $K_n(S',q,n)=QK_n(J,q,n)Q^{-1}$. Since $J$ is a diagonal matrix we have that $$K_n(J,q,n)=\left( {\begin{array}{ccc} K_n(nq-n-q+2,q,n) & 0 & 0 \\ 0 &  K_n(n,q,n) & 0 \\  0 & 0 &  K_n(0,q,n) \end{array}} \right).$$
	The last step it to apply Lemma \ref{l:Krawtchouk} and we finish. 
\end{proof}

\section{Extended perfect codes}\label{S:Extended}
From Section \ref{S:Introduction} we remember, that extended $1$-perfect codes are known only when $n$ and $q$ belong to the following three infinite families. 
\begin{proposition}\label{prop:known_codes}
	Extended $1$-perfect codes in $H(n,q)$ exist for the following values of $n$ and $q$:
	\begin{enumerate}
		\item $n=2$, $q\geq 2$ (one vertex in $H(2,q)$),
		\item $n=q+2$, $q=2^m$, $m\geq 1$ (extended perfect code in $H(q+2,q)$).
		\item $n=2^k$, $k\geq 1$, $q=2$ (extended perfect code in $H(2^k,2)$).
	\end{enumerate}
\end{proposition}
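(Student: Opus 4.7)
The plan is to verify each of the three listed families separately, in each case exhibiting a concrete code and checking the defining property that puncturing any coordinate produces a $1$-perfect code in $H(n-1,q)$. All three constructions are classical, already referenced in Section \ref{S:Introduction}, so the proof reduces to citing them and performing a parameter check; by Proposition \ref{prop:code_partition} it would alternatively suffice to exhibit the equitable $4$-partition with the stated quotient matrix, but the puncturing criterion is more direct.

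Case (1) is immediate: a single vertex $\bar c \in H(2,q)$ projects in either coordinate to a single vertex of $H(1,q)$, and since $H(1,q)$ is a complete graph on $q$ vertices, the radius-$1$ ball around any single vertex covers everything, giving a (trivial) $1$-perfect code. Case (3) is the classical extended binary Hamming code, obtained by appending an overall parity-check bit to the $[2^k-1,\,2^k-1-k,\,3]_2$ Hamming code. Puncturing the parity bit recovers the Hamming code itself, which is $1$-perfect; puncturing any other coordinate yields a binary linear code of length $2^k-1$, dimension $2^k-1-k$, and minimum distance $3$, and any such code is equivalent to the Hamming code, hence $1$-perfect. Equivalently, one can invoke the transitivity of the coordinate-permutation group of the extended Hamming code \cite{MWS} to reduce all $2^k$ projections to a single one.

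Case (2) is the only substantive part and is the source of the parity restriction $q=2^m$. I would realise the code via the hyperoval (doubly-extended Reed--Solomon) construction: starting from an $[q+1,\,q-1,\,3]_q$ Reed--Solomon code — which is $1$-perfect in $H(q+1,q)$ since its size $q^{q-1}$ equals $q^{q+1}/(1+(q+1)(q-1))$ — the associated $(q+1)$-arc in $\mathrm{PG}(2,q)$ extends, when $q$ is even, to a hyperoval; this adjoins a $(q+2)$-nd column to the parity-check matrix while preserving the property that every three columns are linearly independent. The resulting $[q+2,\,q-1,\,4]_q$ MDS code is the extended $1$-perfect code, and puncturing any of its $q+2$ coordinates returns a $[q+1,\,q-1,\,3]_q$ MDS code, $1$-perfect by the same cardinality count. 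The main obstacle is geometric rather than combinatorial: it is exactly the extendability of arcs to hyperovals in even characteristic that supplies the extra coordinate uniformly, while in odd characteristic one only obtains an oval and the construction stops at length $q+1$. The core of the paper lies in proving the converse, which is why this proposition is essentially bookkeeping of known constructions.
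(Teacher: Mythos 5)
Your proof is correct and matches the paper's treatment: the paper states this proposition without proof, as bookkeeping of the classical constructions (\cite{MWS,BRZ:CR}) that you spell out, namely the trivial one-vertex code, the extended binary Hamming code (where your sphere-packing count for each puncture is the right argument), and the hyperoval code for even $q$. One small imprecision in case (2): the $[q+1,q-1,3]_q$ code corresponds to a $(q+1)$-arc in $\mathrm{PG}(1,q)$ (a $2\times(q+1)$ parity-check matrix), not to an arc in $\mathrm{PG}(2,q)$, so one cannot literally "adjoin a column" to its parity-check matrix; the clean statement is that the $[q+2,q-1,4]_q$ code is the one whose $3\times(q+2)$ parity-check matrix has every three columns independent, i.e.\ whose columns form a hyperoval in $\mathrm{PG}(2,q)$ (existing exactly for even $q$), and every puncture of it is a $[q+1,q-1,3]_q$ code, hence $1$-perfect by the same cardinality count.
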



 The main idea of this Section is to find the quotient matrix of the initial distance partition in $H_n(n,q)$ in closed form.  

\begin{theorem}\label{T:general_q}
	Let $C$ be an extended $1$-perfect code in $H(n,q)$, $q\geq 2$, $n\geq 2$. Then $$(q-2)\frac{(q-1)^x-(-1)^x}{x} \in \mathbb{N},$$ where $x=\frac{n+q-2}{q}$. 	
\end{theorem}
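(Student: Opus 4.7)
My plan is to read off an integrality constraint directly from Lemma \ref{l:lastweight}. Since $\Pi$ is equitable in $H_n(n,q)$, the matrix $P := QMQ^{-1}$ has nonnegative integer entries; I will isolate the quantity in the theorem by taking a carefully chosen linear combination of entries of $P$. Throughout, set $x = (n+q-2)/q$ (so $n-x = (nq-n-q+2)/q$) and abbreviate the diagonal entries of $M$ as $\alpha = (-1)^{n-x}(q-1)^x$, $\beta = (-1)^n$, $\gamma = (q-1)^n$.

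First I would compute the $(2,1)$ and $(3,1)$ entries of $P$ by directly expanding $QMQ^{-1}$ using the explicit formulas of Lemma \ref{l:jordan}. I expect each entry to share the common denominator $q^2 x(n-x)$, with numerator an integer-coefficient combination of $\alpha,\beta,\gamma$; specifically,
\[
P_{21} = \frac{(q-2)(n-1)\alpha - (q-1)(n-x)\beta + x\gamma}{q^2 x(n-x)}, \qquad P_{31} = \frac{-n\alpha + (n-x)\beta + x\gamma}{q^2 x(n-x)}.
\]
Taking the difference, the $\gamma$-terms cancel, and using the identity $(q-2)(n-1)+n = nq-n-q+2 = q(n-x)$ both the $\alpha$- and $\beta$-coefficients collapse to $\pm q(n-x)$, giving the clean relation $P_{21} - P_{31} = (\alpha-\beta)/(qx)$.

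Integrality of $P_{21}$ and $P_{31}$ then forces $qx \mid \alpha-\beta$. To rewrite this in the form stated, I observe that $(-1)^{n-x}\alpha = (q-1)^x$ and $(-1)^{n-x}\beta = (-1)^x$, so $(q-1)^x - (-1)^x = (-1)^{n-x}(\alpha - \beta)$ and
\[
(q-2)\,\frac{(q-1)^x - (-1)^x}{x} \;=\; (-1)^{n-x}\, q(q-2)\,(P_{21} - P_{31}) \;\in\; \mathbb{Z}.
\]
Nonnegativity is immediate since $(q-1)^x \geq 1 \geq (-1)^x$ for $q \geq 2$ and integer $x \geq 1$, so the expression lies in $\mathbb{N}$.

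The only delicate point is algebraic, namely spotting the identity $(q-2)(n-1) + n = q(n-x)$ that produces the cancellation and recognizing that the pair $(P_{21},P_{31})$ is precisely the one whose difference isolates $(\alpha-\beta)/(qx)$; other entry pairs leave nonzero $\gamma$-contributions and yield messier constraints. Everything else is mechanical matrix arithmetic guided by Lemma \ref{l:jordan}.
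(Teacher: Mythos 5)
Your proof is correct and I could verify every step: the entries $P_{21}$ and $P_{31}$ you write down do follow from the explicit $Q$ and $Q^{-1}$ of Lemma \ref{l:jordan} (using $qx=n+q-2$ and $q(n-x)=nq-n-q+2$ to normalize the denominators), the identity $(q-2)(n-1)+n=q(n-x)$ gives exactly the cancellation you claim, and integrality of quotient-matrix entries plus the obvious nonnegativity finish the argument. The route is essentially the paper's --- same key lemma (Lemma \ref{l:lastweight}), same object (the first column of $QMQ^{-1}$), same integrality principle --- but your linear combination is different and slightly sharper. The paper expands only the single entry $P_{31}=\frac{1}{q^2(n-x)}\,(\gamma-\beta+q(\beta-\alpha)+(q-2)\frac{\alpha-\beta}{x})$ (in your notation) and argues that, since all other summands in the bracket are integers, the last summand $(q-2)\frac{\alpha-\beta}{x}$ must be an integer as well, which is precisely the stated condition. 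Your difference $P_{21}-P_{31}=\frac{\alpha-\beta}{qx}$ kills the $\gamma$-term entirely and in fact proves the strictly stronger divisibility $qx\mid (q-1)^x-(-1)^x$, of which the theorem's condition is a consequence after multiplying by $q(q-2)$; the single-entry route only gives $x\mid (q-2)((q-1)^x-(-1)^x)$. Either version feeds into Theorem \ref{T:prime} equally well, though the stronger form could conceivably be of independent use. One point both treatments leave implicit and that deserves a sentence in a final write-up: $x$ must be an integer for $(-1)^x$ (and for the matrix $M$) to make sense; this holds because $q-2$ is an eigenvalue of the quotient matrix $S$ and hence an eigenvalue $n(q-1)-qi$ of $H(n,q)$, which forces $n-x=i\in\mathbb{Z}$.
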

\begin{proof}
Consider a distance partition $\Pi=(C=C(0), C(1), C(2))$ defined by $C$, with the quotient matrix $$S=\left( {\begin{array}{ccc} 0 & n(q-1) & 0 \\ 1 & q-2 & (n-1)(q-1) \\  0 & n & n(q-2) \end{array}} \right).$$ 
By Lemma \ref{l:lastweight} we have that the quotient matrix of $\Pi$ in $H_n(n,q)$ equals  $M'=QMQ^{-1}$, where
$$M'=\left( {\begin{array}{ccc} (-1)^{\frac{nq-n-q+2}{q}}(q-1)^{\frac{n+q-2}{q}} & 0 & 0 \\ 0 & (-1)^n & 0 \\  0 & 0 & (q-1)^n \end{array}} \right),$$  
and matrices $Q$ and $Q^{-1}$ were defined in Lemma \ref{l:jordan}.  
Let us count the entry of $M$ in the third row and the first column. After all necessary calculations we have that

\begin{multline*}
	(M')_{3,1}=\frac{1}{q(nq-n-q+2)}\bigg((q-1)^n-(-1)^n+q\big((-1)^n-(q-1)^{\frac{q+n-2}{q}}(-1)^{\frac{nq-n-q}{q}}\big)+ \\ +(q-2)\frac{(q-1)^{\frac{n+q-2}{q}}(-1)^{\frac{nq-q-n+2}{q}} - (-1)^n}{\frac{n+q-2}{q}}\bigg).
\end{multline*}	
	
By definition of weight distribution this value is nonnegative integer. Clearly, the last summand in big brackets must be integer. Therefore,
$$(q-2)\frac{(q-1)^{\frac{n+q-2}{q}}(-1)^{\frac{nq-q-n+2}{q}} - (-1)^n}{\frac{n+q-2}{q}} \in \mathbb{Z}.$$
After multiplying it by $(-1)^{\frac{nq-q-n+2}{q}}$ we have the required.
\end{proof}
	
This new necessary condition proved in Theorem \ref{T:general_q} works for arbitrary $q$. However, we are mainly interested in the case $q=p^m$, where $p$ is a prime number and $m\geq 1$. The next step is to classify all such pairs of $n$ and $q$ that there exist extended $1$-perfect code in $H(n,q)$ and $q$ is a prime power.

\begin{theorem}\label{T:prime}
	Let $C$ be an extended $1$-perfect code in $H(n,q)$ for $q=p^m$, where $p$ is a prime number $m\geq 1$, $n\geq 2$. Then one of the following holds:
	\begin{enumerate}
		\item $n=2$,
		\item $p=2$, $m=1$ and $n=2^t$ for some integer $t\geq 2$,
		\item $p=2$, $m\geq 1$, $n=q+2$.
	\end{enumerate}
	
\end{theorem}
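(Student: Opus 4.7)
The plan is to combine Theorem~\ref{T:general_q} with the classical characterisation of $1$-perfect codes over prime power alphabets. Since puncturing an extended $1$-perfect code in $H(n,q)$ yields a $1$-perfect code in $H(n-1,q)$, and such codes over an alphabet of size $q=p^m$ exist only when $n-1=(q^{t}-1)/(q-1)$ for some integer $t\geq 1$, every admissible pair $(n,q)$ is parametrised by $t$. The quantity appearing in Theorem~\ref{T:general_q} then becomes
\[
 x \;=\; \frac{n+q-2}{q} \;=\; 1+\frac{q^{t-1}-1}{q-1} \;=\; 2+\sum_{i=1}^{t-2}q^{i},
\]
which is automatically an integer, is $1$ for $t=1$, and equals $2$ precisely for $t=2$.

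I would dispose of the small $t$ and of the alphabet $q=2$ first. For $t=1$ one gets $n=2$, i.e.\ case~1. For $t=2$, plugging $x=2$ into Theorem~\ref{T:general_q} yields $q(q-2)^{2}/2\in\mathbb{N}$, which forces $q$ to be even and hence $p=2$, $n=q+2$; this is case~3. For the alphabet $q=2$ itself the factor $q-2$ vanishes, so Theorem~\ref{T:general_q} is vacuous and the parametrisation reads $n=2^{t}$, giving case~2 for $t\geq 2$.

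The heart of the proof is to exclude $t\geq 3$ whenever $q\geq 3$. My plan is to show that $x$ always has a convenient odd prime divisor $r$ and then derive a contradiction by tracking $r$ through the divisibility of Theorem~\ref{T:general_q}. Existence of such an $r$ is settled by separating the parities of $q$: when $q$ is odd, the product $(q-2)\bigl((q-1)^{x}-(-1)^{x}\bigr)$ is a product of two odd integers, so the divisibility forces $x$ itself to be odd, and then from $x\geq q+2\geq 5$ we conclude $x$ has an odd prime divisor; when $q=2^{m}$ with $m\geq 2$, the expansion $x=2+q\sum_{i=0}^{t-3}q^{i}$ gives $v_{2}(x)=1$ and $x/2\geq 3$, so again $x$ admits an odd prime divisor. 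I would choose $r$ to be the \emph{smallest} such, so that by minimality $\gcd(x,r-1)$ has no odd prime factor, and the bounds $v_{2}(x)\leq 1$ above yield $\gcd(x,r-1)\in\{1,2\}$.

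Finally, $r\mid x$ together with Theorem~\ref{T:general_q} gives $r\mid (q-2)\bigl((q-1)^{x}-(-1)^{x}\bigr)$. If $r\mid q-2$, then $q\equiv 2\pmod r$ and $q-1\equiv 1\pmod r$ is invertible, so
\[
 x \;\equiv\; 2+\sum_{i=1}^{t-2}2^{i} \;=\; 2^{t-1} \pmod r,
\]
and $r\mid 2^{t-1}$ contradicts $r$ being odd. Otherwise $(1-q)^{x}\equiv 1\pmod r$, and the multiplicative order $d$ of $1-q$ modulo $r$ divides $\gcd(x,r-1)\in\{1,2\}$. Order $d=2$ gives $q\equiv 2\pmod r$ and returns us to the previous case; order $d=1$ gives $r\mid q=p^{m}$, hence $r=p$. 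If $p=2$ this already contradicts $r$ odd, and if $p$ is odd, then $p\mid x$ combined with $x\equiv 2\pmod p$ (read off from $q\equiv 0$, $q-1\equiv -1$ modulo $p$) forces $p=2$, another contradiction. The main obstacle will be precisely this juggling of cases around $r$: Theorem~\ref{T:general_q} is a single scalar divisibility, and the whole argument rests on the minimality of $r$ making $\gcd(x,r-1)$ small enough to pin down the order of $1-q$.
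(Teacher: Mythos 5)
Your proof is correct and follows essentially the same route as the paper: reduce to $n=\frac{q^t-1}{q-1}+1$, compute $x=2+\sum_{i=1}^{t-2}q^i$, take the smallest odd prime divisor $r$ of $x$ (using $v_2(x)\leq 1$), and bound the order of $q-1$ modulo $r$ by $\gcd(x,r-1)\in\{1,2\}$ to reach a contradiction for $t\geq 3$, $q\geq 3$. The only differences are cosmetic: you treat $t=2$ explicitly and unify the two parity cases via the order of $1-q$, where the paper separately runs the argument on $x$ (for odd $q$) and on $x/2$ (for $q=2^m$, $m\geq 2$).
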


\begin{proof}
	We consider the case when $q$ is a prime power. Since $1$-perfect codes exist in $H(n',q)$ only for $n'=\frac{q^k-1}{q-1}$ for some integer $k\geq 1$ we conclude that
	
	\begin{equation}\label{e:0}
		n=\frac{q^k-1}{q-1}+1,
	\end{equation} 
for some integer $k\geq 1$. 
	
	The case $k=1$ immediately lead us a to the one-vertex code in $H(n,2)$. In the rest of the proof we suppose that $k\geq 2$.
	Our next step is to use Theorem \ref{T:general_q}. Indeed, for $n=\frac{q^k-1}{q-1}+1$ we have that
	\begin{equation}\label{e:1}
		r=(q-2)\frac{(q-1)^x-(-1)^x}{x} \in \mathbb{N},
	\end{equation}
 where 
 	\begin{equation}\label{e:2}
 		x=\frac{q^{k-1}+q-2}{q-1} \in \mathbb{N},
 	\end{equation}	
Let us rewrite this equation in the following form
 	\begin{equation}\label{e:3}
	x(q-1)=q^{k-1}+q-2 
\end{equation}	
	
	The following analysis depends on the parity of $p$.
	
	\textbf{Case $p>2$.} Evidently, it follows from (\ref{e:1}) that $x$ is odd too. Since $k\geq 2$, from (\ref{e:2}) we have $x>1$.  We know that $q$ is odd, so $(q,q-2)=1$ (here and further by $(a,b)$ we mean the greatest common divisor of two positive integers $a$ and $b$.). Simple analysis of (\ref{e:1}) and (\ref{e:3}) shows that
	 	\begin{equation}\label{e:4}
		(x,q)=(x,q-1)=(x,q-2)=1. 
	\end{equation}	
	
	Using this fact and $(\ref{e:1})$ we have that  $\frac{(q-1)^x-(-1)^x}{x} \in \mathbb{N}$.
	Let $t$ be the smallest odd prime divisor of $x$. Since $x>1$ and $x$ is odd, $t$ is not less than $3$. 
	Since $(q-1)^x+1$ is divisible by $t$, we conclude that $(q-1)^{2x}-1$ is divisible by $t$. Equality (\ref{e:4}) implies $(t,q-1)=1$. Consequently, $(q-1)^{t-1}-1$ is also divisible by $t$. These two divisions guarantee us that there exists the smallest natural number $d$ such that  $(q-1)^{d}-1$ is divisible by $t$, and $d$ divides $(t-1,2x)$. It follows from the choice of $t$ that $d$ does not have odd prime divisors. 
	
	Combining this fact with the one that $2x$ is not divisible by $4$, we have that $d=1$ or $d=2$. In other words, $q$ or $q(q-2)$ is divisible by $t$ correspondingly. These cases both contradict to (\ref{e:4}).     
	
	\textbf{Case $p=2$.} First of all, if $m=1$ (i.e. $q=2$) then extended $1$-perfect codes exist in $H(2^k,2)$ for all $k\geq 2$. Hence, further we consider the case $m\geq 2$. If $k=2$ then we have $n=q+2$ where $q$ is a power of $2$. These are exactly parameters of the codes from another known family of codes. To finish the prove we need to consider the case $k\geq 3$. The idea is to repeat the arguments from the previous case for $\frac{x}{2}$.  Clearly, it follows from (\ref{e:2}) that $x$ is even but not divisible by $4$. Hence, $\frac{x}{2}$ is odd. Since $k\geq 3$, we have $\frac{x}{2}>1$. Again, analysis of (\ref{e:1}) and (\ref{e:3}) shows that
	\begin{equation}\label{e:5}
		(\frac{x}{2},q)=(\frac{x}{2},q-1)=(\frac{x}{2},q-2)=1. 
	\end{equation}	
	Define $t$ as the smallest odd prime divisor of $\frac{x}{2}$. As before, $t\geq 3$. Now we have that  $(q-1)^{x}-1$ and $(q-1)^t-1$ are both divisible by $t$. Hence, there exist the smallest natural $d$ such that $d$ divides $(t-1,x)$. By choice of $t$ and the fact that $x$ is not divisible by $4$, we have that $d=1$ or $d=2$. Consequently,  $q$ or $q(q-2)$ is divisible by $t$ correspondingly, which contradicts to (\ref{e:5}). 
\end{proof}

 The proof is relatively short and is based on the fact that $q$ ia prime power. Unfortunately, the idea of the proof can not be applied to the case of arbitrary $q$ directly. Indeed, in general case one has more complicated condition instead of equation (\ref{e:0}). Moreover, as we discussed in Section \ref{S:Introduction}, one does not know any nontrivial $1$-perfect codes when $q$ is not a prime number. So the generalization of ideas from the proof of Theorem \ref{T:prime} might make sense only when one find some series of $1$-perfect codes in this case.

 \section{$2$-Perfect codes in $H(n,q)$}\label{S:2-perfect_H}
 
 \begin{proposition}\label{prop:perf_code_partition}
 	Given a code $C$ in $H(n,q)$, $C$ is a $2$-perfect code if and only if $(C=C(0), C(1), C(2))$ is an equitable partition in $H(n,q)$ with quotient matrix  $$S=\left( {\begin{array}{ccc} 0 & n(q-1) & 0 \\ 1 & q-2 & (n-1)(q-1) \\  0 & 2 & n(q-1)-2 \end{array}} \right).$$
 \end{proposition}
 \begin{proof} The statement follows from the definition of a $2$-perfect code.
 	\end{proof} 
 Let us define a matrix $S'=\frac{n(q-1)E-S}{q}=\frac{1}{q}\left( {\begin{array}{ccc} n(q-1) & -n(q-1) & 0 \\ -1 & (n-1)(q-1)+1 & -(n-1)(q-1) \\  0 & -2 & 2 \end{array}} \right).$
 Matrix $S$ has eigenvalues $n(q-1)$ and $\frac{q}{2}\pm \frac{\sqrt{4(n-2)q+q^2-4n+8}}{2}-2$. By Lloyd theorem these eigenvalues must be also eigenvalues of $H(n,q)$. Direct calculation show that fact is equivalent to the following parameterization (that will be used until the end of this Section):
 $$n=\frac{t^2-q^2+8q-8}{4(q-1)},\, t=qm, $$
 where $m$ is a positive integer.
 In these terms,
 $$S'=\frac{n(q-1)E-S}{q}=\frac{1}{q}\left( {\begin{array}{ccc} \frac{(m^2 - 1)q^2 + 8q - 8}{4} & -\frac{(m^2 - 1)q^2 + 8q - 8}{4} & 0 \\ -1 & \frac{(m^2 - 1)q^2 + 4q}{4} & -\frac{(m^2 - 1)q^2 + 4q - 4}{4} \\  0 & -2 & 2 \end{array}} \right).$$

 
 
 Our next step is to find the Jordan canonical form of $S'$.
 
 \begin{lemma}\label{l:jordan2} 
 	For $q\geq 2$ and $n\geq 2$, the equation $S'=QJQ^{-1}$ holds, where 	
 	$$Q=\left( {\begin{array}{ccc} 1 & 1 & 1 \\ \frac{2((m + 1)q - 4)}{(m^2 - 1)q^2 + 8q - 8}   &  -\frac{2((m - 1)q + 4)}{(m^2 - 1)q^2 + 8q - 8} &   1 \\  -\frac{16}{(m^3 - m^2 - m + 1)q^3 + 2(m^2 + 4m - 5)q^2 - 8(m - 3)q - 16} & \frac{16}{(m^3 + m^2 - m - 1)q^3 - 2(m^2 - 4m - 5)q^2 - 8(m + 3)q + 16}  &  1 \end{array}} \right),$$
 	
 	$$J=\left( {\begin{array}{ccc} s_1 & 0 & 0 \\ 0 & s_2 & 0 \\ 0 & 0 & 0 \end{array}} \right),\, Q^{-1}=$$ 
 	
 	$$=\left( {\begin{array}{ccc}   \frac{(m^2q^2 - q^2 + 8q - 8)(mq - q + 2)}{2(m^2q - 2m - q + 6)mq^2}   &  \frac{(m^2q^2 - q^2 + 8q - 8)(mq + q - 4)(mq - q + 2)}{4(m^2q - 2m - q + 6)mq^2}   & -\frac{(m^2q^2 - q^2 + 8q - 8)(mq + q - 2)(mq - q + 2)}{4(m^2q - 2m - q + 6)mq^2} \\  \frac{(m^2q^2 - q^2 + 8q - 8)(mq + q - 2)}{2(m^2q + 2m - q + 6)mq^2}  & -\frac{(m^2q^2 - q^2 + 8q - 8)(mq + q - 2)(mq - q + 4)}{4(m^2q + 2m - q + 6)mq^2} &  \frac{(m^2q^2 - q^2 + 8q - 8)(mq + q - 2)(mq - q + 2)}{4(m^2q + 2m - q + 6)mq^2} \\  \frac{32}{(m^2q + 2m - q + 6)(m^2q - 2m - q + 6)q^2}  &  \frac{8(m^2q^2 - q^2 + 8q - 8)}{(m^2q + 2m - q + 6)(m^2q - 2m - q + 6)q^2}   & \frac{(m^2q^2 - q^2 + 8q - 8)(mq + q - 2)(mq - q + 2)}{(m^2q + 2m - q + 6)(m^2q - 2m - q + 6)q^2} \end{array}} \right),$$
 	where $s_1=\frac{m^2q}{4} - \frac{m}{2} - \frac{q}{4} + \frac{3}{2}$ and $s_2=\frac{m^2q}{4} +\frac{m}{2} - \frac{q}{4} + \frac{3}{2}$.

 \end{lemma}
 \begin{proof}
 	The proof is obtained by direct calculations.
 	
 \end{proof}

Here we repeat arguments from Section \ref{S:Preliminaries} and prove the following lemma.

\begin{lemma}\label{l:lastweight_2} 
	Given a $2$-perfect code $C$, then the distance partition $\Pi$ of $C$ in $H(n,q)$ is equitable also in $H_n(n,q)$. Corresponding quotient matrix in the last graph equals $QMQ^{-1}$, where
	$$M=\left( {\begin{array}{ccc} (-1)^{s_1}(q-1)^{n-s_1} & 0 & 0 \\ 0 & (-1)^{s_2}(q-1)^{n-s_2} & 0 \\  0 & 0 & (q-1)^n \end{array}} \right),$$
	and matrices $Q$ and $Q^{-1}$ are defined in Lemma \ref{l:jordan2}.  
\end{lemma}

\begin{proof}
	As we discussed before, $\Pi$ is equitable in $H_n(n,q)$. By \cite[Theorem 3 and Example 8]{Kro:struct}
	the corresponding quotient matrix equals $K_n(S',q,n)=QK_n(J,q,n)Q^{-1}$. Since $J$ is a diagonal matrix we have that $$K_n(J,q,n)=\left( {\begin{array}{ccc} K_n(nq-n-q+2,q,n) & 0 & 0 \\ 0 &  K_n(n,q,n) & 0 \\  0 & 0 &  K_n(0,q,n) \end{array}} \right).$$
	The last step it to apply Lemma \ref{l:Krawtchouk} and we finish. 
\end{proof}

 \begin{theorem}\label{T:general_q_2}
 	Let $C$ be a $2$-perfect code in $H(n,q)$, $q\geq 2$, $n=\frac{t^2-q^2+8q-8}{4(q-1)}$, $t=qm$, $m\geq 1$. Then $$2m(q-1)^{n-s_1}\frac{(q-1)^{s_1}+(-1)^{s_1}}{s_1} \in \mathbb{N}.$$ 	
 \end{theorem}
 \begin{proof}
 	Consider a distance partition $\Pi=(C=C(0), C(1), C(2))$ defined by $C$, with the quotient matrix $$S=\left( {\begin{array}{ccc} 0 & n(q-1) & 0 \\ 1 & q-2 & (n-1)(q-1) \\  0 & 2 & n(q-1)-2 \end{array}} \right).$$ 
 	By Lemma \ref{l:lastweight_2} we have that the quotient matrix of $\Pi$ in $H_n(n,q)$ equals  $M'=QMQ^{-1}$, where
 	$$M=\left( {\begin{array}{ccc} (-1)^{s_2}(q-1)^{n-s_2} & 0 & 0 \\ 0 & (-1)^{s_1}(q-1)^{n-s_1} & 0 \\  0 & 0 & (q-1)^n \end{array}} \right),$$
 	  
 	and matrices $Q$ and $Q^{-1}$ were defined in Lemma \ref{l:jordan2}.  
 	Let us count the entry of $M'$ in the third row and the first column. After all necessary calculations we have that

 	\begin{multline*}
 		(M')_{3,1}=\frac{1}{q^2ms_2}\bigg(\frac{2s_2(-1)^{s_1}(q-1)^{n-s_1}+2m(q-1)^n}{s_1}-2(-1)^{s_2}(q-1)^{n-s_2}  \bigg).
 	\end{multline*}	
 	
 	By definition of weight distribution this value is nonnegative integer. Clearly, the first summand in big brackets must be integer. Therefore,
 	$$\frac{2s_2(-1)^{s_1}(q-1)^{n-s_1}+2m(q-1)^n}{s_1}\in \mathbb{Z}.$$ Since $s_2=m+s_1$, we finish the proof.
 	
 \end{proof}

 \section{$2$-Perfect codes in $J(n,w)$}\label{S:2-perfect_J}

 \begin{proposition}\label{prop:code_partition_J}.
 	Given a code $C$ in $J(n,w)$, $C$ is a $2$-perfect code if and only if $(C=C(0), C(1), C(2))$ is an equitable partition in $J(n,w)$ with quotient matrix  $$S=\left( {\begin{array}{ccc} 0 & w(n-w) & 0 \\ 1 & n-2 & w(n-w)-n+1 \\  0 & 4 & w(n-w)-4 \end{array}} \right).$$
 \end{proposition}
  \begin{proof} The statement follows from the definition of a $2$-perfect code.
 \end{proof} 
Let us notice, that matrix $S$ has eigenvalues $w(n-w)$ and $\frac{n \pm t-6}{2}$, where $t=\sqrt{n^2+4wn-4w^2-12n+20}$. These eigenvalues also must be eigenvalues of $J(n,w)$.
 
  Let us define a matrix $S'=\frac{(n+1)E-\sqrt{T}}{2}$ where $T=(n+1)^2E-4(w(n-w)E-S)$.  
 
 
 Our next step is to find the Jordan canonical forms of $T$ and $S'$.
 
 \begin{lemma}\label{l:jordan_J} 
 	For $w\geq 2$ and $n\geq 2w$, the equation $T=QJQ^{-1}$ holds, where 	
 	$$Q=\frac{1}{8}\left( {\begin{array}{ccc} 8 & tw(n-w)+n^2w-nw^2-4nw+4w^2 & -tw(n-w)+n^2w-nw^2-4nw+4w^2 \\ 8 & -t-2nw+n+2w^2+2 &   t-2nw+n+2w^2+2 \\  8 & 8 &  8 \end{array}} \right),$$
 	
 	$$J=\left( {\begin{array}{ccc}  (n+1)^2& 0 & 0 \\ 0 & -2t+n^2-4n(w-1)+4w^2-11 & 0 \\ 0 & 0 & 2t+n^2-4n(w-1)+4w^2-11 \end{array}} \right),$$ 
 	
 	$$Q^{-1}=\frac{1}{(n^2(w-1)w+n(-2w^2+w+5)w+w^4-5w^2+4)}\cdot $$
 	$$\left( {\begin{array}{ccc} 4 & 4w(n-w) & n^2w^2-n^2w-2nw^3+nw^2+nw+w^4-w^2 \\ -\frac{2(t-2nw+n+2w^2-6)}{t} & -\frac{2(tw(n-w)-n^2w+nw^2+4nw-4w^2+8)}{t} & \frac{2(t(nw-w^2+1)-n^2w+nw^2+2nw+n-2w^2+2)}{t}\\ -\frac{2(t+2nw-n-2w^2+6)}{t}  & \frac{2(-tw(n-w)-n^2w+nw^2+4nw-4w^2+8)}{t}  & \frac{2(t(nw-w^2+1)+n^2w-nw^2-2nw-n+2w^2-2)}{t} \end{array}} \right).$$

 \end{lemma}
 \begin{proof}
 	The proof is obtained by direct calculations. 
 	
 \end{proof}

By definition of $S'$ we immediately have the following lemma.
\begin{lemma}\label{l:jordan_J_2} 
	For $w\geq 2$ and $n\geq 2w$, the equation $S'=QJ'Q^{-1}$ holds, where

	$$J'=\left( {\begin{array}{ccc}  0 & 0 & 0 \\ 0 & n+1-\sqrt{-2t+n^2-4n(w-1)+4w^2-11} & 0 \\ 0 & 0 & n+1-\sqrt{2t+n^2-4n(w-1)+4w^2-11} \end{array}} \right),$$ 
	and matrices $Q$, $Q^{-1}$ are from Lemma \ref{l:jordan_J}.

\end{lemma}

 Evidently, the equation from Lemma \ref{l:jordan_J_2} corresponds to a reduction to the Jordan canonical form of the matrix $S'$. Hence, diagonal elements of $J$ are the eigenvalues of $S'$. For our following arguments we also need to know values of Eberlein polynomials on these eigenvalues.
 
 Well-known family of orthogonal Eberlein polynomials is defined by the following equations
 $$E_r(x,w,n)=\sum_{j=0}^{r}{(-1)^{j}{x \choose j}{w-x \choose r-j}{n-w-x \choose r-j}},$$
 where $n$, $w$ and degree $r$ are nonnegative integers.
 
 \begin{lemma}\label{l:Eberlein} 
 	For $n,w,x\in \mathbb{Z}$, $w\geq 2$, $n\geq 2w$, $0 \leq x \leq w$,we have that $$E_w(x,w,n)=(-1)^x{n-w-x \choose n-2w}.$$ 
 	
 \end{lemma}
 
 \begin{proof}
 	By definition of Eberlein polynomial for $r=w$, it is easy to compute  $$E_w(x,w,n)=\sum_{j=0}^{w}{(-1)^{j}{x \choose j}{w-x \choose w-j}{n-w-x \choose w-j}}=(-1)^x{n-w-x \choose w-x}=(-1)^x{n-w-x \choose n-2w}.$$
 	
 \end{proof}

\begin{lemma}\label{l:lastweight_3} 
	Given a $2$-perfect code $C$, then the distance partition $\Pi$ of $C$ in $J(n,w)$ is equitable also in $J_w(n,w)$. Corresponding quotient matrix in the last graph equals $QMQ^{-1}$, where
	$$M=\left( {\begin{array}{ccc} {n-w \choose w} & 0 & 0 \\ 0 & (-1)^{n+1-\sqrt{-2t+h}}{\sqrt{-2t+h}-w-1 \choose n-2w} & 0 \\  0 & 0 & (-1)^{n+1-\sqrt{2t+h}}{\sqrt{2t+h}-w-1 \choose n-2w} \end{array}} \right),$$
	$h=n^2-4n(w-1)+4w^2-11$ and matrices $Q$ and $Q^{-1}$ are defined in Lemma \ref{l:jordan_J}.  
\end{lemma}

\begin{proof}
	As we discussed before, $\Pi$ is equitable in $J_w(w,n)$. By \cite[Theorem 3 and Example 9]{Kro:struct}
	the corresponding quotient matrix equals $E_w(S',w,n)=QE_w(J',w,n)Q^{-1}$. Since $J'$ is a diagonal matrix we have that $$E_w(J',w,n)=
	\left( {\begin{array}{ccc} E_w(0,w,n) & 0 & 0 \\ 0 &  E_w(n+1-\sqrt{-2t+h},w,n) & 0 \\  0 & 0 &  E_w(n+1-\sqrt{2t+h},w,n) \end{array}} \right).$$
	The last step it to apply Lemma \ref{l:Eberlein} and we finish. 
\end{proof}

\begin{theorem}\label{T:general_J}
	Let $C$ be a $2$-perfect code in $J(n,w)$, $w\geq 2$, $n\geq 2w$. Then $$\frac{2(2nw-n-2w^2+6)}{t}\bigg({\sqrt{-2t+h}-w-1 \choose n-2w}-{\sqrt{2t+h}-w-1 \choose n-2w}  \bigg) \in \mathbb{Z},$$ where $h=n^2-4n(w-1)+4w^2-11$ and $t=\sqrt{n^2+4wn-4w^2-12n+20}$. 	
\end{theorem}
\begin{proof}
	Consider a distance partition $\Pi=(C=C(0), C(1), C(2))$ defined by $C$, with the quotient matrix $$S=\left( {\begin{array}{ccc} 0 & w(n-w) & 0 \\ 1 & n-2 & w(n-w)-n+1 \\  0 & 4 & w(n-w)-4 \end{array}} \right).$$
	
	By Lemma \ref{l:lastweight_3} we have that the quotient matrix of $\Pi$ in $J_w(w,n)$ equals  $M'=QMQ^{-1}$, where
	$$M=\left( {\begin{array}{ccc} {n-w \choose w} & 0 & 0 \\ 0 & (-1)^{n+1-\sqrt{-2t+h}}{\sqrt{-2t+h}-w-1 \choose n-2w} & 0 \\  0 & 0 & (-1)^{n+1-\sqrt{2t+h}}{\sqrt{2t+h}-w-1 \choose n-2w} \end{array}} \right),$$
$h=n^2-4n(w-1)+4w^2-11$ and matrices $Q$ and $Q^{-1}$ are defined in Lemma \ref{l:jordan_J}.    
	Let us count the entry of $M'$ in the third row and the first column. After all necessary calculations we have that

	\begin{multline*}
		(M')_{3,1}=\frac{1}{(n^2(w-1)w+n(-2w^2+w+5)w+w^4-5w^2+4)}\cdot \\ \bigg( 4{n-w \choose w}- (-1)^{n+1-\sqrt{-2t+h}}{\sqrt{-2t+h}-w-1 \choose n-2w}\frac{2(t-2nw+n+2w^2-6)}{t}- \\- (-1)^{n+1-\sqrt{2t+h}}{\sqrt{2t+h}-w-1 \choose n-2w}\frac{2(t+2nw-n-2w^2+6)}{t}\bigg).
	\end{multline*}	
	
	By definition of weight distribution this value is nonnegative integer. Clearly, the sum of two last summands in big brackets must be integer. Therefore,
	$$\frac{2(2nw-n-2w^2+6)}{t}\bigg({\sqrt{-2t+h}-w-1 \choose n-2w}-{\sqrt{2t+h}-w-1 \choose n-2w}  \bigg) \in \mathbb{Z}.$$
	
\end{proof}

This new necessary condition proved in Theorem \ref{T:general_J} works for arbitrary $w$ and $n$.
 
\section{Conclusion}\label{S:Conclusion}

In this work we completely characterize all numbers $n$, $p$ and $m$, for which there exists an extended $1$-perfect code in $H(n,q)$, $q=p^m$. The situation for $q$ not a prime power seems more complicated, especially since nontrivial $1$-perfect codes are still not known in this case.

The crucial idea of the proof of Theorem \ref{T:prime} is to find analytically some elements of the quotient matrix of the distance partition of initial code in $i$-distance Hamming graph for some $i$ and then analyze it. New necessary conditions from Theorems \ref{T:general_q_2} and \ref{T:general_J} require further analysis and may exclude some infinite series of parameters. 

\section{Acknowledgments} 
The author was supported by the NSP P. Beron project CP-MACT. Author is grateful to Fedor Petrov for useful discussions on the proof of Theorem \ref{T:general_q}.







\bibliography{Extended_perfect_codes}

\end{document}